\documentclass[10pt,letterpaper]{article}

\usepackage[T1]{fontenc}
\usepackage{lmodern}
\usepackage{microtype}
\usepackage{amsmath,amssymb,amsthm,mathtools}
\usepackage{geometry}
\usepackage[hidelinks,hyperfootnotes=false]{hyperref}

\allowdisplaybreaks

\newtheorem{theo}{Theorem}[section]
\newtheorem{conj}[theo]{Conjecture}
\newtheorem{ques}[theo]{Question}
\newtheorem{Lemma}[theo]{Lemma}
\theoremstyle{remark}
\newtheorem{remk}[theo]{Remark}

\newcommand{\N}{\mathbb N}
\newcommand{\fl}[1]{\left\lfloor #1\right\rfloor}
\newcommand{\ce}[1]{\left\lceil #1\right\rceil}
\renewcommand{\a}{\fl{\frac{\Delta}{2}}}
\renewcommand{\b}{\ce{\frac{\Delta}{2}}}

\hypersetup{
  pdftitle={The Maximum Number of Shortest Paths in Graphs},
  pdfauthor={Jing Yu and Jie-Xiang Zhu},
  pdfkeywords={shortest paths, extremal graph theory, bounded-degree graphs, random walks}
}

\begin{document}

\begin{center}
{\Large\bfseries THE MAXIMUM NUMBER OF SHORTEST PATHS IN GRAPHS\par}
\vspace{1.2em}
{\large JING YU\textsuperscript{b} AND JIE-XIANG ZHU\textsuperscript{a,*}\par}
\vspace{0.75em}
{\small \textsuperscript{a}Department of Mathematics, Shanghai Normal University, Shanghai, China\par}
{\small \textsuperscript{b}Shanghai Center for Mathematical Sciences, Fudan University, Shanghai, China\par}
\end{center}

\begingroup
\renewcommand{\thefootnote}{}
\renewcommand{\footnoterule}{%
  \kern -3pt
  \hrule width 2cm
  \kern 1pt
}
\footnotetext{* Corresponding author.}
\footnotetext{E-mail addresses: \texttt{jyu@fudan.edu.cn} (J. Yu), \texttt{zhujx@shnu.edu.cn} (J.-X. Zhu).}
\endgroup

\begin{center}
\begin{minipage}{0.89\textwidth}
\small
\noindent\textbf{Abstract.}
Benjamini and Tzalik obtained an upper bound on the number of shortest paths between two vertices at distance $t$ in a multigraph of maximum degree at most $\Delta$, and proposed a conjecture on the sharp bound. In this paper, we develop a probabilistic counting argument based on probability distributions induced by random walks from the two endpoints. This approach yields a sharp bound for multigraphs and confirms their conjecture. We further determine the exact maximum for simple graphs and thus answer another question of Benjamini and Tzalik. We also investigate the equality cases, describing the structure of the subgraph formed by shortest paths between $x$ and $y$ and giving tight examples.
\end{minipage}
\end{center}

\medskip
\noindent\textbf{AMS Subject Classification (2020):} Primary 05C35; Secondary 05C12, 60C05.

\noindent\textbf{Keywords:} shortest paths, extremal graph theory, bounded-degree graphs, random walks.

\section{Introduction}
Throughout, we work with multigraphs. Degrees are counted with multiplicity, and paths are distinguished by their edge sequences. Self-loops may be ignored, as they do not affect any of our results.
Let $G$ be a multigraph of maximum degree at most $\Delta$, and let distinct vertices $x, y \in V(G)$ be at distance $t \ge 1$. Since the case $\Delta = 1$ is trivial, we assume throughout that $\Delta \ge 2$. Let $n_G(x,y)$ denote the number of shortest paths between $x$ and $y$ in $G$. We refer to such a shortest path as an $x$--$y$ geodesic. Benjamini and Tzalik \cite{ben2023}  obtained an upper bound on $n_G(x, y)$ using an entropy argument, closely related to the entropy-based approach of Babu and Radhakrishnan \cite{BabuRadhakrishnan2014}. Their result improved upon the na\"ive $\Delta (\Delta-1)^{t-1}$ bound as follows:

\begin{theo}[{\cite[Theorem~1]{ben2023}}]\label{thm:BT}
    Let $G$ be a multigraph of maximum degree at most $\Delta$ and let $t$ be the distance in $G$ between two vertices $x,y$. Then \[n_G(x,y)\le \Delta\left(\a\b\right)^{(t-1)/2}.\]
\end{theo}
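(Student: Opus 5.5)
The plan is an entropy argument, running the chain rule once from each endpoint and then averaging the two bounds. Let $H$ be the subgraph of $G$ formed by the union of all $x$--$y$ geodesics, with edges counted with multiplicity. Let $L_i$ be the set of vertices $v$ of $H$ with $d(x,v)=i$, for $0\le i\le t$. Every edge of $H$ joins $L_{i-1}$ to $L_i$ for some $i$.

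For $v\in L_i$, write $a_v$ for the number of edges of $H$ from $v$ to $L_{i-1}$, and $b_v$ for the number of edges of $H$ from $v$ to $L_{i+1}$. For every internal vertex $v\in L_i$ with $1\le i\le t-1$ we have $a_v,b_v\ge 1$ and $a_v+b_v\le \deg_G(v)\le\Delta$. Since $a_v,b_v$ are positive integers with sum at most $\Delta$, integer AM--GM gives
\[
a_vb_v\le \a\b .
\]
At the endpoints we only use $b_x\le\Delta$ and $a_y\le\Delta$.

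Next choose an $x$--$y$ geodesic $P$ uniformly at random, encoded by its edge sequence $(e_1,\dots,e_t)$. Let $v_0=x,v_1,\dots,v_t=y$ be the vertices it visits, so $\mathrm{H}(P)=\log n_G(x,y)$. There are two chain-rule expansions.
\begin{itemize}
\item \emph{Forward from $x$.} Conditionally on $e_1,\dots,e_{i-1}$, the edge $e_i$ is one of the $b_{v_{i-1}}$ forward edges at $v_{i-1}$. Hence $\mathrm{H}(e_i\mid e_1,\dots,e_{i-1})\le \mathbb E\log b_{v_{i-1}}$, and
\[
\log n_G(x,y)\le \log b_x+\sum_{i=1}^{t-1}\mathbb E\log b_{v_i}.
\]
\item \emph{Backward from $y$.} Revealing $e_t,e_{t-1},\dots$ in turn, each new edge is one of the $a_{v_i}$ backward edges at the current vertex $v_i$. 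Hence
\[
\log n_G(x,y)\le \log a_y+\sum_{i=1}^{t-1}\mathbb E\log a_{v_i}.
\]
\end{itemize}
Adding the two bounds gives
\[
2\log n_G(x,y)\le \log(b_xa_y)+\sum_{i=1}^{t-1}\mathbb E\log(a_{v_i}b_{v_i})\le 2\log\Delta+(t-1)\log\Bigl(\a\b\Bigr).
\]
Exponentiating and taking square roots yields $n_G(x,y)\le \Delta\left(\a\b\right)^{(t-1)/2}$. When $t=1$ the sum is empty and the bound is simply $\Delta$.

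The main point needing care is the pairing step: the forward and backward expansions must be combined so that the two factors at each internal vertex $v_i$ appear together as $a_{v_i}b_{v_i}$. This works because both expansions take the expectation over the same random path $P$, so the sums can be merged term by term inside the expectation before the pointwise AM--GM bound is applied.

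In the multigraph setting I also need to check two details.
\begin{itemize}
\item Parallel edges are counted correctly in $a_v,b_v$, and the conditional support of $e_i$ is genuinely at most $b_{v_{i-1}}$. This holds because geodesics move strictly from one layer to the next.
\item Edges of $G$ inside a layer, or to vertices outside $H$, only decrease $a_v+b_v$ below $\Delta$, so they can only help the bound.
\end{itemize}
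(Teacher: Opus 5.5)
Your proof is correct, but it is not the route the paper takes. The paper does not prove Theorem~\ref{thm:BT} separately. It quotes the result from Benjamini and Tzalik, who used an entropy argument as you do, and recovers it from Theorem~\ref{thm:main}: with $q=\a\b$ and $4\a\b\le\Delta^2$, for even $t$ one has $2q^{t/2}=2\sqrt{q}\,q^{(t-1)/2}\le\Delta q^{(t-1)/2}$.

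Rather than adding two chain-rule expansions, the paper takes the mixture $\mu=\frac12(\mu_{x\to y}+\mu_{y\to x})$ of the forward and backward layer-by-layer random walks. It then uses $|\mathcal P|\le 1/\min_{P}\mu(P)$ and bounds $\mu(P)$ from below geodesic by geodesic via Lemma~\ref{lem:reciprocal-product}. Your two bounds are exactly $\log n_G(x,y)\le\mathbb E[-\log\mu_{x\to y}(P)]$ and $\log n_G(x,y)\le\mathbb E[-\log\mu_{y\to x}(P)]$. Their sum therefore tests the uniform measure against the geometric mean $\sqrt{\mu_{x\to y}\mu_{y\to x}}$, whereas the paper uses the arithmetic mean. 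Both proofs rest on the same layering and the same per-vertex input $a_vb_v\le q$.

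What your route buys is brevity: there is no integrality or parity case analysis. The pairing step you single out is just linearity of expectation, so nothing delicate hides there.

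What the paper's route buys is sharpness. Take odd $\Delta$ and even $t$, and a geodesic $P$. Either some interior vertex $v$ of $P$ has $a_vb_v\le q-1$, or all have $a_vb_v=q$. In the second case, since $t-1$ is odd, the forward and backward degree products $A(P)$ and $B(P)$ along $P$ cannot coincide. In both cases $\frac1{A(P)}+\frac1{B(P)}\ge\Delta q^{-t/2}$, which is strictly more than the geometric-mean bound $2q^{-(t-1)/2}$, and this gives the sharp $2q^{t/2}$. The sum of your two expansions only controls $\mathbb E\log(A(P)B(P))$, so it cannot detect this arithmetic--geometric gap and stops at $\Delta q^{(t-1)/2}$. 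The pointwise formulation also gives direct access to the equality cases, since equality forces $\mu$ to be uniform and every geodesic to be tight in Lemma~\ref{lem:reciprocal-product}.
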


Moreover, for $t \ge 2$, Benjamini and Tzalik \cite{ben2023} gave a tight example for Theorem~\ref{thm:BT}. Let $C_{2t,\Delta}$ be the multigraph obtained from
the cycle $C_{2t}$ by replacing its edges alternately with $\a$ and $\b$ multiple edges.
They observed that $C_{2t,\Delta}$ attains the bound in Theorem~\ref{thm:BT} when $\Delta$ is even, as well as when both $\Delta$
and $t$ are odd. For the remaining case, they proposed the following
conjecture:
\begin{conj}[{\cite[Conjecture~2]{ben2023}}]\label{conj:BT}
    In the case of odd $\Delta$ and even $t$,
    \[n_G(x,y)\le 2\left(\a\b\right)^{t/2}.\]
The bound is sharp, as witnessed by $C_{2t,\Delta}$.
\end{conj}
We confirm Conjecture~\ref{conj:BT}. That is, we show the following result:
\begin{theo}\label{thm:main}
Let $G$ be a multigraph of maximum degree at most $\Delta$ and let $t$ be the distance in $G$ between two vertices $x,y$. Then
\begin{align*}
n_G(x,y) \le
\begin{cases}
\Delta\left(\a\b\right)^{(t-1)/2}, & \text{if $t$ is odd,} \\[2mm]
2\left(\a\b\right)^{t/2}, & \text{if $t$ is even.}
\end{cases}
\end{align*}
The bound is sharp.
For $t = 1$, equality is attained when $x$ and $y$ are joined by $\Delta$ multiple edges; for $t \ge 2$, it is attained by $C_{2t,\Delta}$.
\end{theo}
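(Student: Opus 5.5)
The odd case of Theorem~\ref{thm:main} is exactly Theorem~\ref{thm:BT}, and the tightness claims are direct checks: $\Delta$ parallel edges for $t=1$, and for $C_{2t,\Delta}$ the two arcs of the cycle give $2(\a\b)^{t/2}$ when $t$ is even. So the real content is the bound $n_G(x,y)\le 2(\a\b)^{t/2}$ for odd $\Delta=2a+1$ and even $t$, where $a=\a$ and $b=\b=a+1$. This improves on Theorem~\ref{thm:BT}, whose bound squares to $\Delta^2(ab)^{t-1}$, to one that squares to $(\Delta^2-1)(ab)^{t-1}$, since $4ab=\Delta^2-1$.

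\textbf{Setup.} I would restrict to the geodesic subgraph $H$, layered as $L_0=\{x\},L_1,\dots,L_t=\{y\}$ by distance from $x$, with every edge of $H$ joining consecutive layers. Each vertex $v$ of $H$ has an in-degree $d^-(v)$ (edges to the previous layer) and an out-degree $d^+(v)$ (edges to the next layer), with multiplicity, and $d^-(v)+d^+(v)\le\Delta$. For an internal vertex, integrality gives $d^-(v)d^+(v)\le ab$.

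\textbf{Random-walk argument.} Let $p(P)$ be the probability that the forward walk from $x$, choosing an out-edge uniformly at each step, traces the geodesic $P$. Let $q(P)$ be the same for the backward walk from $y$ using in-edges. Cauchy--Schwarz gives $\sum_P\sqrt{p(P)q(P)}\le 1$. Each summand equals
\[
\Bigl(d^+(x)\,d^-(y)\prod_{0<i<t}d^+(v_i)d^-(v_i)\Bigr)^{-1/2},
\]
and this recovers Theorem~\ref{thm:BT}. The loss comes from pairing the endpoint factors $d^+(x)\le\Delta$ and $d^-(y)\le\Delta$ in a single square root.

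\textbf{Planned improvement.} To remove this loss, I would replace the symmetric mixture by a layer-dependent one. Decompose each geodesic at its odd-layer vertices $v_1,v_3,\dots,v_{t-1}$; there are $t/2$ of them, and $x,y$ lie at even layers. Then write
\[
n_G(x,y)=\sum_{v_1,v_3,\dots}\ \prod_k m(v_{2k-1},v_{2k+1}),
\]
where $m(u,u')$ counts two-step paths from $u$ to $u'$, and the first and last factors count paths from $x$ and to $y$. Each odd vertex $u$ is then charged with its whole neighbourhood budget, $d^-(u)+d^+(u)\le\Delta$. Concretely, I would build a probability distribution on geodesics from a walk that at even-layer vertices moves uniformly (forward from the $x$ side, backward from the $y$ side), and at odd-layer vertices is split between predecessor and successor choices. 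The goal is that the weight of each geodesic factorises as $\prod_{\text{odd }u}\phi(d^-(u),d^+(u))$ times even-layer terms bounded by $ab$.

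The target per-odd-vertex inequality is an AM--GM-type bound. Given $d^-+d^+\le 2a+1$, the two adjacent half-edges at $u$ should contribute at most $\sqrt{ab}$ to the per-step count, except that the two extreme odd layers adjacent to $x$ and $y$ together contribute $2$ in place of $\Delta/\sqrt{ab}$ on each side. In other words, I need to show that the endpoint losses can be absorbed so that the total is $2(ab)^{t/2}$. A Schur-test / Hölder inequality with vertex weights $d^+(u)^{\beta}d^-(u)^{1-\beta}$ is what I would try first for this.

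\textbf{Main obstacle.} The hard step is finding the correct asymmetric weights, and proving that they yield a genuine sub-probability. This is where the parity hypotheses (odd $\Delta$, even $t$) and the integrality $d^-d^+\le ab$ must enter, because a purely real-valued optimisation only returns Theorem~\ref{thm:BT}. If the direct weighting fails, my fallback is an induction on $t$ in steps of two that tracks the pair $(f(v),d^-(v))$ for middle-layer vertices, where $f(v)$ is the number of $x$--$v$ geodesics. I would prove a strengthened hypothesis of the form $f(v)\le\lambda(d^-(v))(ab)^{\cdot}$ for a suitable function $\lambda$, which is sharp on $C_{2t,\Delta}$.
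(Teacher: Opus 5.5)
Your reduction is sound: for $t$ odd, and also for $\Delta$ even, the stated bound coincides with Theorem~\ref{thm:BT}, and the tightness checks on $C_{2t,\Delta}$ are routine. But for $\Delta=2a+1$ and $t$ even, which is the only case with new content, you only outline a strategy. The layer-dependent walk, the function $\phi$, the Schur/H\"older weights and the inductive hypothesis (whose exponent is left blank) are never specified, and you yourself flag their construction as the unsolved main obstacle. So Conjecture~\ref{conj:BT} is not proved.

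Your diagnosis of the loss is also off, and it steers you toward an unnecessarily complicated fix. Bounding $d^+(x),d^-(y)\le\Delta$ costs nothing, since the extremal graph $C_{2t,\Delta}$ has $d^+(x)=d^-(y)=\Delta$. The slack lies in Cauchy--Schwarz itself. On $C_{2t,\Delta}$ with $t$ even, every geodesic has $p(P)\neq q(P)$, and indeed $\sum_P\sqrt{p(P)q(P)}=2\sqrt{ab}/\Delta<1$.

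The missing idea is to use the arithmetic rather than the geometric mean of your two walk measures. This is exactly the symmetric mixture you propose to abandon. The measure $\mu=\tfrac12(p+q)$ is a probability distribution on the set $\mathcal P$ of geodesics, so $|\mathcal P|\le 1/\min_P\mu(P)$. Since the endpoint factors now sit in separate summands,
\[
\mu(P)\ \ge\ \frac{1}{2\Delta}\Bigl(\frac{1}{D^+}+\frac{1}{D^-}\Bigr),\qquad D^{\pm}:=\prod_{0<i<t}d^{\pm}(v_i).
\]
Set $Q:=ab$ and $r:=t-1$, which is odd. It suffices to show $1/D^++1/D^-\ge\Delta Q^{-t/2}$.

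If some $v_i$ has $d^+(v_i)d^-(v_i)<Q$, integrality gives $d^+(v_i)d^-(v_i)\le Q-1$. Then already $1/D^++1/D^-\ge 2/\sqrt{D^+D^-}\ge 2Q^{-(r-1)/2}(Q-1)^{-1/2}>\Delta Q^{-t/2}$, because $4Q^2-\Delta^2(Q-1)=3Q+1>0$.

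Otherwise $\{d^+(v_i),d^-(v_i)\}=\{a,a+1\}$ for all $i$, so $D^+/D^-=((a+1)/a)^m$. Here $m$ is the number of $i$ with $d^+(v_i)=a+1$ minus the number with $d^+(v_i)=a$. Since $m\equiv r\pmod 2$ is odd, $|m|\ge1$, and therefore $1/D^++1/D^-\ge Q^{-r/2}\bigl(\sqrt{(a+1)/a}+\sqrt{a/(a+1)}\bigr)=\Delta Q^{-t/2}$.

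Hence $\mu(P)\ge\tfrac12Q^{-t/2}$ and $n_G(x,y)\le 2Q^{t/2}$. The parity-forced inequality $D^+\neq D^-$ is exactly where odd $\Delta$, even $t$ and integrality enter. The resulting AM--GM gain $\Delta/(2\sqrt{Q})$ is precisely the ratio between Theorem~\ref{thm:BT} and the target bound. No layer-dependent or asymmetric weights are needed.
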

We also describe the structure of the subgraph formed by all $x$--$y$ geodesics when equality holds. Benjamini and Tzalik also suggested the following question:
\begin{ques}[{\cite[Question~3]{ben2023}}]\label{ques:BT}
    What is the exact maximum number $n_G(x,y)$ among all simple graphs in terms of the bound $\Delta$ on the degree, and the distance $t$ between $x$ and $y$?
\end{ques}
For simple graphs, when $t \ge 3$, their entropy argument also yields the upper bound
\[n_G(x,y) \le \Delta(\Delta - 1)\left(\a\b\right)^{(t-3)/2}.\] We answer Question~\ref{ques:BT} by determining the exact maximum in the following theorem:
\begin{theo}\label{thm:2}
Let $G$ be a simple graph of maximum degree at most $\Delta$ and let $t$ be the distance in $G$ between two vertices $x,y$. Then
\begin{align*}
n_G(x,y) \le
\begin{cases}
1, & \text{if $t=1$,} \\[2mm]
\Delta, & \text{if $t = 2$,} \\[2mm]
\Delta(\Delta - 1)\left(\a\b\right)^{(t-3)/2}, & \text{if $t\ge 3$ is odd,} \\[2mm]
2(\Delta -1)\left(\a\b\right)^{(t-2)/2}, & \text{if $t\ge 4$ is even.}
\end{cases}
\end{align*}
These upper bounds are sharp.
\end{theo}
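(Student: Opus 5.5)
The plan is to reduce to a weighted, endpoint-sensitive form of the multigraph bound, Theorem~\ref{thm:main}, by collapsing the first and last layers, and then to give explicit constructions. Only the $x$--$y$ geodesics matter, so I would pass to the geodesic subgraph. It is layered as $L_0=\{x\},L_1,\dots,L_t=\{y\}$, where $L_i$ is the set of vertices at distance $i$ from $x$ lying on some geodesic, and every edge joins consecutive layers.

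The cases $t=1$ and $t=2$ are immediate. For $t=1$ there is a single edge. For $t=2$ each geodesic is determined by its middle vertex, and there are at most $\deg x\le\Delta$ of these.

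For $t\ge3$ I would collapse the first layer. Delete $x$, identify all of $L_1$ into one new vertex $x'$, and keep each edge between $L_1$ and $L_2$ as an edge from $x'$. Then:
\begin{itemize}
\item geodesics of $G$ correspond bijectively to pairs consisting of a first edge $xu$ and an $x'$--$y$ geodesic starting through $u$;
\item vertices of $L_2$ keep degree at most $\Delta$;
\item by simplicity each $u\in L_1$ sends at most $\Delta-1$ edges forward, so the number of edges leaving $x'$, each weighted by the number of ways to reach it, is at most $\Delta(\Delta-1)$.
\end{itemize}
For odd $t$ I would do the same at the $y$ end, obtaining a weighted multigraph whose two endpoints are at distance $s=t-2$ (odd). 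The key lemma to prove is then: \emph{if all internal vertices have degree at most $\Delta$, the number of geodesics between $x$ and $y$ at odd distance $s$ is at most $w(x)\,(\a\b)^{(s-1)/2}$, where $w(x)$ is the total first-step weight at $x$.} Taking $w(x)\le\Delta(\Delta-1)$ gives exactly $\Delta(\Delta-1)(\a\b)^{(t-3)/2}$.

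I would prove this lemma by rerunning the random-walk/probabilistic counting argument behind Theorem~\ref{thm:main}. The only change is that the first step of the walk from $x$ is drawn from the weighted distribution instead of uniformly over at most $\Delta$ edges. Since the odd-distance bound in Theorem~\ref{thm:main} is linear in the endpoint degree $\Delta$, I expect it to generalise to an arbitrary endpoint weight essentially verbatim.

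The even case $t\ge4$ is where I expect the main obstacle. Collapsing one side gives only $\Delta(\Delta-1)(\a\b)^{(t-2)/2}$, and the factor $\Delta$ must come down to $2$. This mirrors how, for multigraphs, the even bound $2(\a\b)^{t/2}$ beats the naive $\Delta\cdot(\text{odd bound})$. So I would collapse both ends, getting endpoints $x',y'$ at distance $t-2$ (even) with forward out-weights at most $\Delta-1$ per vertex of $L_1$ and $L_{t-1}$. I would then prove an even-distance analogue of the lemma in the shape $2(\Delta-1)(\a\b)^{(s)/2}$ with $s=t-2$. The intended mechanism is the same pairing of the two random walks (from $x$ and from $y$) that yields the factor $2$ in Theorem~\ref{thm:main}. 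The difference is that the first layer's contribution is bounded by $\Delta-1$ per vertex rather than by the total degree of $x$. First I would try to write the multigraph proof's key inequality as a product over the middle edge of the geodesic, and check that replacing $\Delta$ by the per-vertex forward degree $\Delta-1$ at both ends goes through.

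For sharpness I would blow up $C_{2t,\Delta}$, replacing each bundle of multiple edges by a complete bipartite graph between vertex classes.
\begin{itemize}
\item For $t=3$: take $L_1$ and $L_2$ of size $\Delta$ each, joined by $K_{\Delta,\Delta}$ minus a perfect matching, with $x$ joined to all of $L_1$ and $y$ to all of $L_2$. This gives $\Delta(\Delta-1)$ geodesics.
\item For larger odd $t$: insert between consecutive layers alternating blocks of sizes $\a$ and $\b$ joined completely. Each added pair of layers multiplies the count by $\a\b$, and all degrees stay at most $\Delta$.
\item For even $t$: start from a two-branch version in which $x$ has two neighbours, each with $\Delta-1$ forward neighbours, and then alternate $\a$/$\b$ blocks. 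This matches $2(\Delta-1)(\a\b)^{(t-2)/2}$.
\end{itemize}
Checking the degree bookkeeping at the junctions between these blocks is routine.
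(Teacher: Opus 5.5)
There is a genuine gap in the even case of the upper bound, and the sharpness constructions for $t\ge4$ do not work.

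\textbf{Upper bound.} Your handling of $t\le3$ is fine. Your odd-$t$ reduction is also fine in substance: collapsing $L_1$ and $L_{t-1}$ and running both walks with endpoint weight at most $\Delta(\Delta-1)$ amounts to the paper's bounds $d^+(x)d^+(v_1)\le\Delta(\Delta-1)$ and $d^-(y)d^-(v_{t-1})\le\Delta(\Delta-1)$, followed by Lemma~\ref{lem:reciprocal-product} on $v_2,\dots,v_{t-2}$ with $r=t-3$.

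However, your key lemma is false as stated, because it controls only the weight at $x$. Let $x$ have a single neighbour $u$, let $u$ have $\Delta-1$ further neighbours $v_j$, and join each $v_j$ to $y$ by $\Delta-1$ parallel edges. All internal degrees are $\Delta$, $s=3$ and $w(x)=1$, yet there are $(\Delta-1)^2>q$ geodesics for $\Delta\ge3$, where $q=\lfloor\Delta/2\rfloor\lceil\Delta/2\rceil$. The backward walk needs its first step controlled as well, so you must assume $w(x),w(y)\le W$. You do have this after collapsing both ends.

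The real gap is the even case, which you leave open. The mechanism you sketch there, trading the total endpoint weight for the per-vertex forward degree $\Delta-1$, is not what works: the $\Delta$ choices at the first step cannot simply be discarded.

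No new idea is needed. Keep $W=\Delta(\Delta-1)$ at both ends. For even $t$ the number $r=t-3$ of internal factors is odd, so the odd-$r$ case of Lemma~\ref{lem:reciprocal-product} gives $\frac{1}{A'}+\frac{1}{B'}\ge\Delta q^{-(t-2)/2}$. Hence $\mu(P)\ge\frac{1}{2\Delta(\Delta-1)}\cdot\Delta q^{-(t-2)/2}=\frac{1}{2(\Delta-1)}q^{-(t-2)/2}$.

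Put differently, the even-distance multigraph bound is also linear in the endpoint weight, namely $\frac{2W}{\Delta}q^{s/2}$. The factor $\Delta$ you want to ``bring down to $2$'' is cancelled by the lemma exactly as in Theorem~\ref{thm:main}. For odd $\Delta$ this is precisely where the integrality refinement beyond AM--GM enters, and your plan never invokes it.

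\textbf{Sharpness.} Your constructions for $t\ge4$ fail, and the ``routine'' junction bookkeeping is in fact the substance.

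For even $t$, a graph in which $x$ has only two neighbours is never extremal when $\Delta\ge3$. Equality forces $\mu(P)$ to attain the lower bound for every geodesic, hence $d^+(x)d^+(v_1)=\Delta(\Delta-1)$ and $|L_1|=\Delta$. For $t=4$ you can see this directly: the count is $\sum_{v\in L_2}d^-(v)d^+(v)\le(\Delta-1)|E(L_1,L_2)|\le 2(\Delta-1)^2$, which is less than $2(\Delta-1)q$ once $\Delta\ge4$.

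For odd $t\ge5$ and $\Delta=2k$, equality forces every vertex of $L_1$ to have exactly $2k-1$ forward neighbours. It also forces every vertex of $L_2,\dots,L_{t-2}$ to have exactly $k$ neighbours on each side, so $|L_i|=2(2k-1)$ for $2\le i\le t-2$. For $k\ge3$ such a layer is not a union of blocks of size $k$. So complete joins between blocks of sizes $\lfloor\Delta/2\rfloor$ and $\lceil\Delta/2\rceil$ cannot work, and one needs $(2k-1,k)$-biregular and $k$-regular bipartite pieces, as in the paper.

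For odd $\Delta=2k+1$ you must in addition enforce the balance condition from Lemma~\ref{lem:reciprocal-product} along every geodesic. The paper does this with two disjoint channels of width $2k$. They start with $K_{k,2k}$ and $K_{k+1,2k}$, continue with $(k+1)$- or $k$-regular bipartite graphs so that the two types alternate, and close with complete bipartite graphs into end layers of sizes $k+1$ and $k$.
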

We also investigate the equality cases and give tight examples; see Section~\ref{sec:thm2}.

A related but complementary extremal parameter was recently introduced by Knor, Sedlar, \v{S}krekovski, and Zhang~\cite{KSSZ26}: they sum the number of geodesics over all pairs of vertices and study extremal graphs of fixed order. Here the order is unrestricted, while the maximum degree and the distance of one distinguished pair are fixed. From a probabilistic perspective, Benjamini, Hoppen, Ofek, Pra{\l}at and Wormald~\cite{BHOW11} determined the limiting distribution of the number of geodesics between two random vertices in a random regular graph.

Before turning to the proofs, we briefly describe our approach. Let $\mathcal P$ denote the set of all $x$--$y$ geodesics. To estimate $|\mathcal P|$, we use a probabilistic counting argument. In spirit, our argument is close to Lubell's probabilistic proof of the LYM inequality \cite{lubell1966}. For any probability distribution $\mu$ on $\mathcal P$, it follows from
$$\sum_{P \in \mathcal P} \mu(P) = 1$$
that
\begin{equation} \label{eq:pro-counting}
 |\mathcal P| \le \frac{1}{\min_{P \in \mathcal P} \mu(P)}.
\end{equation}
Equality holds if and only if $\mu$ is the uniform distribution on $\mathcal P$. Therefore, it suffices to construct a probability distribution $\mu$ on $\mathcal P$ and find a constant $c>0$ such that
\[
\mu(P) \ge c \quad \text{for every $P \in \mathcal P$.}
\]
To describe our probability distribution, let $H$ be the subgraph consisting of all vertices and edges that belong to at least one $x$--$y$ geodesic. The graph $H$ admits a natural layering according to the distance from $x$. We then average the distributions on $\mathcal P$ induced by the forward and backward random walks along the layers of $H$. The resulting bidirectional random-walk construction allows us to obtain a pointwise lower bound on the probability of each geodesic, in contrast to the entropy method, which controls the average information content of a random geodesic. In the present setting, this approach leads to a refinement of the bound obtained from the entropy argument in \cite{ben2023}.

\section{Proof of Theorem~\ref{thm:main}}
\subsection{The upper bound}
Most of our notation is standard or should be clear from the context. In what follows, we set
\begin{align*}
q := \a\b = \begin{cases}
k(k+1), & \text{if } \Delta=2k+1,\\
k^2, & \text{if } \Delta=2k.
\end{cases}
\end{align*}
We first prove the following lemma, which will provide the key lower bound in our probabilistic counting argument, together with the corresponding equality conditions.

\begin{Lemma}\label{lem:reciprocal-product}
Let $\Delta, r \in \N$, and let $(a_1,b_1),\ldots,(a_{r},b_{r})$ be pairs of positive integers such that for every $1\leq i\leq r$,
\[
        a_i+b_i\leq \Delta.
\]
Set
\[
        A:=\prod_{i=1}^{r}a_i,
        \qquad
        B:=\prod_{i=1}^{r}b_i,
\]
and
\[
q := \a\b.
\]
Then
\[
\frac1A+\frac1B \geq
\begin{cases}
2 q^{-r/2},
& \text{if $r$ is even},\\[2mm]
\Delta q^{-(r+1)/2},
& \text{if $r$ is odd}.
\end{cases}
\]
If $\Delta = 2k$, equality holds if and only if each $(a_i, b_i)$ is $(k, k)$. If $\Delta = 2k+1$, equality holds if and only if each $(a_i, b_i)$ is either $(k, k+1)$ or $(k+1, k)$, and the numbers of occurrences of these two pairs are equal when $r$ is even and differ by exactly $1$ when $r$ is odd.
\end{Lemma}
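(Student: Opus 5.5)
Since $1/A$ and $1/B$ only decrease when we enlarge any $a_i$ or $b_i$, we first reduce to pairs with $a_i+b_i=\Delta$. The argument is AM--GM followed by a parity-sensitive analysis of integer products.

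\textbf{Step 1 (AM--GM).} We have
\[
\frac1A+\frac1B\ge \frac{2}{\sqrt{AB}}, \qquad AB=\prod_{i=1}^r a_ib_i .
\]
For positive integers with $a_i+b_i\le\Delta$, the product $a_ib_i$ is at most $\a\b=q$, with equality iff $\{a_i,b_i\}=\{\a,\b\}$. Hence $AB\le q^r$, so $1/A+1/B\ge 2q^{-r/2}$. This settles the case of even $r$. For equality we need both $A=B$ and every pair optimal. When $\Delta=2k$ this forces every pair to be $(k,k)$. When $\Delta=2k+1$ each pair is $(k,k+1)$ or $(k+1,k)$, and $A=B$ holds iff the two types occur equally often.

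\textbf{Step 2 (odd $r$, even $\Delta$).} Here $q=k^2$, so the target is $2k\cdot k^{-(r+1)}=2k^{-r}=2q^{-r/2}$. The Step~1 bound therefore already gives the claim, with equality iff every pair is $(k,k)$.

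\textbf{Step 3 (odd $r$, odd $\Delta=2k+1$).} This is the main obstacle, since AM--GM alone gives only $2(k(k+1))^{-r/2}$, which is weaker than required. The target is
\[
(2k+1)(k(k+1))^{-(r+1)/2}=\frac{1}{k^{m}(k+1)^{m+1}}+\frac{1}{k^{m+1}(k+1)^{m}}, \qquad m=\tfrac{r-1}{2}.
\]
This is exactly the value attained by the balanced configuration.

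First I would use Step~1's monotonicity and the product bound $AB\le q^r$. With $AB=P$ fixed, the function $1/A+1/B$ is minimized when $A$ and $B$ are as close as possible. I would split into two cases.

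\emph{Case (i): $AB<q^r$.} Since $AB$ is a product of integers $a_ib_i\le k(k+1)$, any non-optimal pair has $a_ib_i\le k(k+1)-1$. Then
\[
\frac1A+\frac1B\ge \frac{2}{\sqrt{AB}}\ge 2q^{-(r-1)/2}\bigl(q-1\bigr)^{-1/2}.
\]
It remains to check that this exceeds $(2k+1)q^{-(r+1)/2}$, i.e.\ that $2\sqrt{q}\ge(2k+1)\sqrt{1-1/q}$. Squaring, this reads $4q\ge (2k+1)^2(1-1/q)$, i.e.\ $4k^2+4k\ge 4k^2+4k+1-(2k+1)^2/q$. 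This holds because $(2k+1)^2/q>1$. The inequality is strict, so there is no equality in this case.

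\emph{Case (ii): $AB=q^r$.} Then every pair is $(k,k+1)$ or $(k+1,k)$. If $j$ pairs are of the first type, then $A=k^j(k+1)^{r-j}$ and $B=k^{r-j}(k+1)^j$. Writing $A/B=((k+1)/k)^{r-2j}$, the convex function $s\mapsto \rho^{s}+\rho^{-s}$ in $s=r-2j$ is minimized at $|r-2j|$ smallest. Since $r$ is odd, the minimum is at $|r-2j|=1$, which gives exactly the target value, and the strict monotonicity in $|s|$ yields the equality characterization.

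Finally, I would note that Case (i) also covers pairs with $a_i+b_i<\Delta$, and that $k\ge1$ holds since $\Delta\ge2$.
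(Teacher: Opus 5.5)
Your proposal is correct and follows essentially the same route as the paper. Both use AM--GM with $a_ib_i\le q$; for $\Delta=2k+1$, both split into the case where some $a_ib_i\le q-1$, ruled out by the same inequality $4q^2>(2k+1)^2(q-1)$, and the case where every pair is $(k,k+1)$ or $(k+1,k)$, settled by strict monotonicity of $\rho^{u}+\rho^{-u}$ in $|u|$. The differences are cosmetic: you read off the even-$r$ equality case directly from the AM--GM equality conditions, and your opening reduction to $a_i+b_i=\Delta$ is never actually used, which is harmless since $a_ib_i\le q$ holds for all admissible pairs.
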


\begin{proof}
Observe that \[a_i b_i\leq \a \b = q\] for every $i$.
Suppose first that $\Delta = 2k$. Then $q = k^2$, and hence
\[
AB = \prod_{i=1}^{r} a_i b_i \le q^{r}.
\]
Consequently,
\[
\frac1A+\frac1B \geq \frac{2}{\sqrt{AB}} \ge \frac{2}{q^{r/2}}.
\]
Equality holds only if equality holds in both inequalities above, that is to say, each $(a_i, b_i)$ is $(k, k)$.

We now consider the case $\Delta = 2k+1$, so that $q = k(k+1)$. If $a_i b_i<q$ for some $i$, then, since
$a_i b_i$ is an integer,
\[
a_i b_i\leq q-1,
\]
and therefore
\[
AB \le (q-1) q^{r-1}.
\]
Hence,
\[
\frac1A+\frac1B
        \geq \frac{2}{\sqrt{AB}}
        \geq
        \frac{2}{q^{(r-1)/2}\sqrt{q-1}}.
\]
When $r$ is even,
\[
\frac{2}{q^{(r-1)/2}\sqrt{q-1}}
>
2q^{-r/2};
\]
whereas when $r$ is odd,
\[
\frac{2}{q^{(r-1)/2}\sqrt{q-1}}
>
(2k+1)q^{-(r+1)/2},
\]
where the latter inequality follows from
\[
4q^2-(2k+1)^2(q-1)=3q+1>0.
\]

It remains to consider the case $a_i b_i=q$ for every $i$. The condition $a_i+b_i\leq 2k+1$ then implies
\[
        \{a_i,b_i\}=\{k,k+1\}.
\]
Let $s$ be the number of indices for which
$(a_i,b_i)=(k,k+1)$. Then
\[
        A=k^s(k+1)^{r-s},
        \qquad
        B=(k+1)^s k^{r-s}.
\]
Set $s' = 2s- r$, which is the difference between the numbers of occurrences of $(k,k+1)$ and $(k+1, k)$. Then
\[
\frac1A+\frac1B
=
q^{- r/2}
\left[
\left(\frac{k+1}{k}\right)^{s'/2}
+
\left(\frac{k}{k+1}\right)^{s'/2}
\right].
\]
For fixed $k > 0$, the function
\[
u\mapsto
\left(\frac{k+1}{k}\right)^u
+
\left(\frac{k}{k+1}\right)^u
\]
is strictly increasing on $[0,\infty)$. Hence $\frac1A+\frac1B$ is minimized when $|s'|$ is as small as possible. When $r$ is even, $s'$ is even and may equal $0$. Therefore,
\[
\frac1A+\frac1B\geq 2q^{-r/2},
\]
with equality if and only if $s'=0$. When $r$ is odd, we have that $s'$ is a nonzero odd integer so that $|s'| \ge 1$. Therefore,
\begin{align*}
\frac1A+\frac1B
\ge
q^{- r/2}
\left(
\sqrt{\frac{k+1}{k}}
+
\sqrt{\frac{k}{k+1}}
\right) = \Delta q^{-(r+1)/2},
\end{align*}
with equality if and only if $s' = \pm 1$.
This completes the proof.
\end{proof}

We now turn to the multigraph $G$. The case $t=1$ is immediate, so in what follows we assume that $t\geq2$. Delete all vertices and edges that do not lie on an $x$--$y$ geodesic, and denote the resulting graph by $H$. For $0 \le i \le t$, define the $i$th geodesic layer by
\[
V_i:= \left\{ v \in V(H) : d_H(x, v) = i \right\}.
\]
In particular,
\[
V_0 = \{x\}, \qquad V_t = \{y\}.
\]
Since each vertex of $H$ lies on an $x$--$y$ geodesic, we have
\[
V(H) = \bigsqcup_{i=0}^{t} V_i,
\]
and every edge of $H$ joins two consecutive layers. Moreover, every vertex $v \in V_i$, $1 \le i \le t-1$, has an incident edge to each of $V_{i-1}$ and $V_{i+1}$. For each $v\in V_i$ with $1\leq i\leq t-1$, let
\[
        d^-(v):=|E(v,V_{i-1})|,
        \qquad
        d^+(v):=|E(v,V_{i+1})|,
\]
where $E(v,V_j)$ denotes the set of edges of $H$ joining $v$ to vertices in $V_j$, counted with multiplicity. Then
\[
  d^-(v), \, d^+(v) \ge 1 \qquad \text{and} \qquad   d^-(v)+d^+(v)\leq \mathrm{deg}_G(v) \le \Delta.
\]
We also set
\[
d^+(x) := |E(x,V_{1})|,\qquad d^-(y) := |E(y,V_{t-1})|,
\]
Clearly,
$$d^+(x),\, d^-(y) \le \Delta.$$

Let $\mathcal P$ be the set of all $x$--$y$ geodesics. We next define two probability distributions on $\mathcal P$ induced by random walks from $x$ to $y$ and from $y$ to $x$, respectively. First, define the probability distribution $\mu_{x \to y}$ by the forward random walk along the geodesic layers from $x$ to $y$, where at each vertex an edge leading to the next layer is chosen uniformly at random. Thus, for each
\[
        P=(x=v_0,e_1,v_1,\ldots,e_{t},v_{t}=y) \in \mathcal P,
\]
we have
\[
\mu_{x \to y}(P)
        =
        \frac{1}{
        d^+(x)\prod_{i=1}^{t-1}d^+(v_i)}.
\]
The analogous backward random walk along the geodesic layers from $y$ to $x$ defines another probability distribution $\mu_{y \to x}$ satisfying
\[
        \mu_{y \to x}(P)
        =
        \frac{1}{
        d^-(y)\prod_{i=1}^{t-1}d^-(v_i)}.
\]

Set
\[
\mu := \frac12 \left( \mu_{x \to y} +  \mu_{y \to x} \right).
\]
Then $\mu$ is also a probability distribution on $\mathcal P$. Now fix
\[
P=(x=v_0,e_1,v_1,\ldots,e_{t},v_{t}=y) \in \mathcal P,
\]
set
\[
        a_i:=d^+(v_i),
        \quad
        b_i:=d^-(v_i), \quad 1 \le i \le t-1,
\]
and
\[
        A(P):=\prod_{i=1}^{t-1}a_i,
        \qquad
        B(P):=\prod_{i=1}^{t-1}b_i.
\]
Since $d^+(x),d^-(y)\leq \Delta$, Lemma~\ref{lem:reciprocal-product} with $r = t-1$ gives
\begin{align*}
\mu(P) &= \frac12 \left( \frac{1}{d^+(x)A(P)} + \frac{1}{d^-(y)B(P)}\right) \\
&\ge \frac{1}{2\Delta} \left( \frac{1}{A(P)} + \frac{1}{B(P)}\right)\\
& \ge \begin{cases}
\Delta^{-1} q^{-(t-1)/2},
& \text{if $t$ is odd},\\[2mm]
2^{-1} q^{-t/2},
& \text{if $t$ is even}.
\end{cases}
\end{align*}
Combining this with \eqref{eq:pro-counting}, we obtain
\[
n_G(x, y) = |\mathcal P| \le \begin{cases}
\Delta q^{(t-1)/2},
& \text{if $t$ is odd},\\[2mm]
2 q^{t/2},
& \text{if $t$ is even}.
\end{cases}
\]
If equality holds in this bound, then every $P\in\mathcal P$ must attain equality in the pointwise lower bound for $\mu(P)$. Hence equality must hold at every step of the estimate for $\mu(P)$.

\subsection{Equality cases and tight examples}

Now we consider the structure of $H$ when equality holds. We may restrict our attention to $H$, since vertices and edges of $G \setminus H$ do not affect $n_G(x,y)$. We distinguish three cases. When $\Delta$ is even, the structure of $H$ can be described explicitly. For odd $\Delta$, the equality conditions involve both local degree constraints and a global balance condition along geodesics, so we restrict ourselves to recording these conditions rather than giving a full classification.

\medskip

\noindent\textbf{Case 1: $\Delta=2k$.} By the equality conditions in Lemma~\ref{lem:reciprocal-product}, every $v \in V_i$ with $1 \le i \le t-1$ satisfies
\[
d^-(v) = d^+(v) = k.
\]
Together with $d^+(x) = d^-(y) = 2k$, this implies that for every $1\le i \le t-1$,
\[
|V_i| = 2.
\]

Let $E(V_j, V_{j+1})$ denote the set of edges of $H$ joining $V_j$ and $V_{j+1}$. The edge sets $E(V_0,V_1)$ and $E(V_{t-1},V_t)$ are then uniquely determined: each consists of $k$ multiple edges from the corresponding endpoint to each of the two vertices in the adjacent layer. Hence it remains to describe $E(V_i,V_{i+1})$ for $1\le i\le t-2$. After labeling the two vertices in each of $V_i$ and $V_{i+1}$, the edge set $E(V_i,V_{i+1})$ can be represented by a $2\times2$ matrix $M_i$ with nonnegative integer entries, each recording the multiplicity of the corresponding pair of vertices. By the equality conditions in Lemma~\ref{lem:reciprocal-product}, all row and column sums of $M_i$ equal $k$. Hence
$$ M_i= \begin{pmatrix} \rho_i & k-\rho_i\\ k-\rho_i & \rho_i \end{pmatrix} $$
for some integer $0\le \rho_i\le k$. Conversely, every choice of integers $0\leq \rho_i\leq k$ gives an equality case.
\medskip

\noindent\textbf{Case 2: $\Delta=2k + 1$ with $k \ge 2$.} For
$v\in V_i$ with $1\leq i\leq t-1$, we say that $v$ is of type $A$ if
\[
(d^-(v),d^+(v))=(k,k+1),
\]
and of type $B$ if
\[
(d^-(v),d^+(v))=(k+1,k).
\]
By the equality conditions in Lemma~\ref{lem:reciprocal-product}, every vertex of $V_i$, $1\leq i\leq t-1$, is of one of these two types; let $p_i$ and $q_i$ denote the respective numbers of vertices of types $A$ and $B$. We first determine the size of each layer. Since $d^+(x)=2k+1$, counting $|E(V_0, V_1)|$ gives
\[
k p_1+(k+1)q_1=2k+1.
\]
As $k \ge 2$, this implies $(p_1, q_1) = (1, 1)$. Similarly, for $1\leq i\leq t-2$, counting $|E(V_i,V_{i+1})|$ gives
\[
(k+1)p_i+kq_i
=
kp_{i+1}+(k+1)q_{i+1}.
\]
Starting from $(p_1,q_1)=(1,1)$ and proceeding inductively, we obtain
\[
p_i=q_i=1 \qquad\text{for every } 1\leq i\leq t-1.
\]
Equivalently, for every $1\le i \le t-1$,
\[
|V_i| = 2,
\]
with exactly one vertex of each type in $V_i$. In particular, $E(V_0,V_1)$ and $E(V_{t-1},V_t)$ are uniquely determined. Therefore, it remains to describe $E(V_i,V_{i+1})$ for $1\le i\le t-2$. Order the two vertices in each $V_i$ by type as $(A,B)$. As in the previous case, $E(V_i,V_{i+1})$ is represented by a $2\times2$ matrix $M_i$ of the form
$$ M_i= \begin{pmatrix} \rho_i & k+1 - \rho_i\\ k-\rho_i & \rho_i \end{pmatrix} $$
for some integer $0\le \rho_i\le k$. Unlike the even-$\Delta$ case, where each $\rho_i$ can be chosen freely, the equality conditions in Lemma~\ref{lem:reciprocal-product} impose an additional global constraint in the odd-$\Delta$ case: along every $x$--$y$ geodesic, the numbers of vertices of types $A$ and $B$ lying in $V_1,\ldots,V_{t-1}$ must be equal when $t$ is odd and differ by exactly $1$ when $t$ is even.
\medskip

\noindent\textbf{Case 3: $\Delta=3$.}
We retain the notation of Case~2. The subtlety in this case is that
\[
p_1+2q_1=3
\]
has two nonnegative integer solutions, namely $(p_1, q_1) = (1, 1)$ or $(3, 0)$. Similarly, counting $|E(V_{t-1},V_t)|$ gives
\[
2p_{t-1}+q_{t-1}=3,
\]
and hence $(p_{t-1}, q_{t-1}) = (1, 1)$ or $(0, 3)$.
For $1\leq i\leq t-2$, the same counting argument as above gives
\[
2p_i+q_i=p_{i+1}+2q_{i+1}.
\]
Thus the sequence $(p_i,q_i)$ must satisfy these recurrence and boundary conditions. Once the sequence $(p_i,q_i)$, $1\le i\le t-1$, is fixed, the edge sets $E(V_0,V_1)$ and $E(V_{t-1},V_t)$ are uniquely determined. It remains to describe $E(V_i, V_{i+1})$ for $1 \le i \le t-2$. After ordering the vertices in $V_i$ and
$V_{i+1}$ by type, the edge set $E(V_i,V_{i+1})$ is represented by a
$(p_i+q_i)\times(p_{i+1}+q_{i+1})$ matrix with nonnegative integer entries. Its row sums are $2$ for vertices of type $A$ and $1$ for vertices of type $B$, while its column sums are $1$ for vertices of type $A$ and $2$ for vertices of type $B$. As in Case~2, the global constraint still holds: along every $x$--$y$ geodesic, the numbers of vertices of types $A$
and $B$ lying in $V_1,\ldots,V_{t-1}$ must be equal when $t$ is odd and differ by exactly $1$ when $t$ is even.

\medskip

Finally, as a simple special case, when $H=C_{2t,\Delta}$, one readily checks that the equality conditions in Lemma~\ref{lem:reciprocal-product} are satisfied and that $d^+(x) = d^-(y) = \Delta$. Hence $C_{2t, \Delta}$ is a tight example.

\begin{remk}
We conclude this section with a simple observation concerning the extremal example $C_{2t,\Delta}$, which may be of independent interest. Fix antipodal vertices $x,y$, and choose an $x$--$y$ geodesic according to the probability distribution $\mu$. There are only two possible vertex sequences, corresponding to the two directions around the underlying cycle. Conditional on either direction, the choices of the multiple edges at the $t$ successive steps are independent and uniform. After rescaling the edge labels to lie in $[0,1]$, the distribution $\mu$ induces, via this parametrization, a probability measure $\nu_\Delta$ on
\[ \{-1,+1\}\times[0,1]^t. \]
For fixed $t$, as $\Delta\to\infty$,
\[ \nu_\Delta\Rightarrow \nu, \]
where $\nu$ is the product of the uniform probability measure on $\{-1,+1\}$ and Lebesgue probability measure on $[0,1]^t$.
\end{remk}

\section{Proof of Theorem~\ref{thm:2}} \label{sec:thm2}

\subsection{The upper bound}
In this section we consider the case where $G$ is a simple graph. We retain the notation from the previous section. In particular, let $H$ be the subgraph obtained by deleting all vertices and edges that do not lie on an $x$--$y$ geodesic, and let $V_0, \ldots,V_t$ be the corresponding geodesic layers. The cases $t \le 2$ are immediate. We first consider $t=3$. In this case, every
$x$--$y$ geodesic is uniquely determined by an edge in $E(V_1, V_2)$. Hence
\[
n_G(x, y) = |E(V_1, V_2)|.
\]
Since $G$ is simple, every vertex in $V_1$ has exactly one neighbor in $V_0=\{x\}$, and thus has at most $\Delta-1$
neighbors in $V_2$. Moreover, $|V_1|\leq \Delta$. It follows that
\[
n_G(x, y) = |E(V_1, V_2)|
\leq |V_1|(\Delta-1)  \le \Delta (\Delta - 1).
\]
Equality holds if and only if
\[
|V_1|=|V_2|=\Delta
\]
and $H[V_1, V_2]$ is obtained from $K_{\Delta,\Delta}$ by deleting a perfect matching.

We henceforth assume that $t\geq4$. We follow the same probabilistic argument as in the previous section, with the modifications arising from the assumption that $G$ is simple. Since $V_0= \{x \}$ and $V_t=\{y\}$, the simplicity of $G$ implies
\[
d^-(v)=1
\quad\text{and}\quad
d^+(v)\leq \Delta-1
\quad\text{for every }v\in V_1,
\]
and similarly,
\[
d^+(v)=1
\quad\text{and}\quad
d^-(v)\leq \Delta-1
\quad\text{for every }v\in V_{t-1}.
\]

Let $\mathcal P$ be the set of all $x$--$y$ geodesics. As in the multigraph case, define two probability distributions
$\mu_{x\to y}$ and $\mu_{y\to x}$ on $\mathcal P$ by the random
walks from $x$ to $y$ and from $y$ to $x$, respectively. Then for each
\[
P = (x = v_0, v_1, \ldots, v_{t-1}, v_t = y)\in \mathcal P,
\]
we have
\[
\mu_{x \to y}(P) = \frac{1}{d^+(x) d^+(v_1) \prod_{i=2}^{t-2} d^+(v_i)} \ge \frac{1}{\Delta (\Delta - 1)} \cdot \frac{1}{ \prod_{i=2}^{t-2} d^+(v_i)}
\]
and
\[
\mu_{y \to x}(P) = \frac{1}{d^-(y) d^-(v_{t-1}) \prod_{i=2}^{t-2} d^-(v_i)} \ge \frac{1}{\Delta (\Delta - 1)} \cdot \frac{1}{ \prod_{i=2}^{t-2} d^-(v_i)}.
\]
Now, set
\[
        a_i:=d^+(v_i),
        \quad
        b_i:=d^-(v_i), \quad 2 \le i \le t-2,
\]
and
\[
        A'(P):=\prod_{i=2}^{t-2}a_i,
        \qquad
        B'(P):=\prod_{i=2}^{t-2}b_i.
\]
Then
\[
a_i + b_i = d^+(v_i) + d^-(v_i) \le \Delta\quad\text{for every }2 \le i \le t-2.
\]
As before, we consider the probability distribution $\mu = \frac12(\mu_{x \to y} + \mu_{y \to x})$. Applying Lemma~\ref{lem:reciprocal-product} with $r  = t -3$, we obtain, for every $P \in \mathcal P$,
\begin{align*}
\mu(P) \ge \frac{1}{2 \Delta (\Delta - 1)} \left( \frac{1}{A'(P)} + \frac{1}{B'(P)}\right) \ge \begin{cases}
\Delta^{-1} (\Delta - 1)^{-1} q^{-(t-3)/2},
& \text{if $t$ is odd},\\[2mm]
2^{-1} (\Delta - 1)^{-1} q^{-(t-2)/2},
& \text{if $t$ is even}.
\end{cases}
\end{align*}
Therefore, by the probabilistic counting bound \eqref{eq:pro-counting},
\[
n_G(x, y) = |\mathcal P|  \le \begin{cases}
\Delta (\Delta - 1) q^{(t-3)/2},
& \text{if $t$ is odd},\\[2mm]
2 (\Delta - 1) q^{(t-2)/2},
& \text{if $t$ is even}.
\end{cases}
\]

\subsection{Equality cases and tight examples}

We next examine the structure of $H$ when equality holds and give tight examples. By the preceding argument, equality implies
\[
|V_1| = |V_{t-1}| = \Delta,
\]
and the edge sets $E(V_0, V_1)$ and $E(V_{t-1}, V_t)$ are uniquely determined. Hence it remains to determine the bipartite graphs $H[V_i, V_{i+1}]$ for $1\le i \le t-2$. We distinguish two cases.

\medskip

\noindent\textbf{Case 1: $\Delta=2k$.} By the equality conditions in Lemma~\ref{lem:reciprocal-product}, every $v \in V_i$ with $2\leq i\leq t-2$ satisfies
\[
d^-(v) = d^+(v) = k.
\]
Moreover, equality requires $d^+(v) = 2k-1$ for every $v \in V_1$. Since $|V_1|=2k$, counting $|E(V_1, V_2)|$ gives
\[
(2k - 1) |V_1| = k |V_2|,
\]
and hence $|V_2| = 2(2k-1)$. Similarly, $|V_{t-2}| = 2(2k-1)$. For $2\le i \le t-3$, counting $|E(V_{i}, V_{i+1})|$ from the two sides gives
\[
|V_i| = |V_{i+1}|.
\]
Consequently,
\[
|V_i| = 2(2k-1)
\quad\text{for every }2\le i \le t-2.
\]

The equality conditions further determine the degrees in the bipartite
graphs between consecutive layers. In particular,
$H[V_1,V_2]$ is $(2k-1,k)$-biregular, with degree $2k-1$ on $V_1$ and degree $k$ on $V_2$. Similarly,
$H[V_{t-2},V_{t-1}]$ is $(k,2k-1)$-biregular. Such simple biregular bipartite graphs exist, for example, by standard cyclic constructions. For every $2\leq i\leq t-3$, the bipartite graph $H[V_i,V_{i+1}]$ is $k$-regular. Conversely, any choice of bipartite graphs between consecutive layers satisfying the above degree conditions yields a tight example.

\medskip

\noindent\textbf{Case 2: $\Delta=2k+1$.} For $v\in V_i$ with $2\leq i\leq t-2$, we say that $v$ is of
type $A$ if
\[
(d^-(v),d^+(v))=(k,k+1),
\]
and of type $B$ if
\[
(d^-(v),d^+(v))=(k+1,k).
\]
By the equality conditions in Lemma~\ref{lem:reciprocal-product},
every vertex in $V_i$, $2\leq i\leq t-2$, is of one of these two types; let $p_i$ and $q_i$ denote the respective numbers of vertices of types $A$ and $B$.

Since $|V_1|=\Delta=2k+1$ and equality requires
$d^+(v)=2k$ for every $v\in V_1$, counting
$|E(V_1,V_2)|$ gives
\[
kp_2+(k+1)q_2=2k(2k+1).
\]
Similarly, counting $|E(V_{t-2},V_{t-1})|$ gives
\[
(k+1)p_{t-2}+kq_{t-2}=2k(2k+1).
\]
For $2\leq i\leq t-3$, counting $|E(V_i,V_{i+1})|$
from the two sides gives
\[
(k+1)p_i+kq_i
=
kp_{i+1}+(k+1)q_{i+1}.
\]
Thus the sequence $(p_i,q_i)$ must satisfy
these recurrence and boundary conditions.

Unlike the case where $\Delta$ is even, the bipartite graphs $H[V_i,V_{i+1}]$ need not be biregular. Once the sequence
$(p_i,q_i)$, $2\leq i\leq t-2$, is fixed, the bipartite graphs
between consecutive layers must realize the following prescribed
degrees. In $H[V_1,V_2]$, every vertex in $V_1$ has degree $2k$, while
$p_2$ vertices in $V_2$ have degree $k$ and $q_2$ vertices have
degree $k+1$. Similarly, in $H[V_{t-2},V_{t-1}]$, every vertex in
$V_{t-1}$ has degree $2k$, while $q_{t-2}$ vertices in $V_{t-2}$
have degree $k$ and $p_{t-2}$ vertices have degree $k+1$. For $2\leq i\leq t-3$, in $H[V_i,V_{i+1}]$, there are $q_i$
vertices of degree $k$ and $p_i$ vertices of degree $k+1$ in
$V_i$, whereas there are $p_{i+1}$ vertices of degree $k$ and
$q_{i+1}$ vertices of degree $k+1$ in $V_{i+1}$. In addition, along every $x$--$y$ geodesic, the numbers of
vertices of types $A$ and $B$ lying in $V_2,\ldots,V_{t-2}$
must be equal when $t$ is odd and differ by exactly $1$ when
$t$ is even.

We now construct tight examples. Since tracking the numbers of vertices of types $A$ and $B$ along every $x$--$y$ geodesic can be cumbersome, we arrange the two types to alternate along each geodesic, so that the required balance is automatic.

We choose $(p_i,q_i) = (2k, 2k)$ for every $2\le i \le t-2$. This sequence satisfies the boundary and recurrence conditions derived above. Thus $|V_i|=4k$ for every $2\leq i\leq t-2$.

Then we split the construction into two internally disjoint channels. Write
\[
V_1=V_1^{(1)} \bigsqcup V_1^{(2)},
\qquad
|V_1^{(1)}|=k,\qquad |V_1^{(2)}|=k+1,
\]
For each $2\leq i\leq t-2$, partition
\[
V_i=V_i^{(1)}\bigsqcup V_i^{(2)},
\qquad
|V_i^{(1)}|=|V_i^{(2)}|=2k.
\]
In the first channel, the types in
$V_2^{(1)},\ldots,V_{t-2}^{(1)}$ alternate starting with $A$,
whereas in the second channel they alternate starting with $B$.

There are no edges between the two channels. At the left end, take
\[
H[V_1^{(1)},V_2^{(1)}]=K_{k,2k},
\qquad
H[V_1^{(2)},V_2^{(2)}]=K_{k+1,2k}.
\]
For $2\leq i\leq t-3$ and $j\in\{1,2\}$, choose
$H[V_i^{(j)},V_{i+1}^{(j)}]$ to be $(k+1)$-regular if
$V_i^{(j)}$ is of type $A$, and $k$-regular if it is of type $B$.

It remains to specify the right end. For $j\in\{1,2\}$, let
\[
|V_{t-1}^{(j)}|=
\begin{cases}
k+1, & \text{if $V_{t-2}^{(j)}$ is of type $A$,}\\
k,   & \text{if $V_{t-2}^{(j)}$ is of type $B$,}
\end{cases}
\]
and take
\[
H[V_{t-2}^{(j)},V_{t-1}^{(j)}]
=
K_{\,2k,\,|V_{t-1}^{(j)}|}.
\]

Along every $x$--$y$ geodesic, the types therefore alternate.
If $t$ is odd, the number $t-3$ of layers
$V_2,\ldots,V_{t-2}$ is even, so the two types occur equally often.
If $t$ is even, $t-3$ is odd, so one of the two types occurs exactly
once more than the other. Hence the equality conditions in
Lemma~\ref{lem:reciprocal-product} are satisfied in both cases.
Therefore, this construction attains the bound.

\medskip

We have thus proved the stated bounds, described the structure of $H$ when equality holds, and provided tight examples. This completes the proof.

\section*{Acknowledgements}
J.Y. acknowledges partial support by the National Natural Science Foundation of China grant 12371343 and 12525110 (PI: Hehui Wu).
J.-X.Z. acknowledges support from the Natural Science Foundation of Shanghai (25ZR1402414) and the NSFC (12501185). The authors used ChatGPT (OpenAI) during the development of this work for exploratory discussions of proof strategies and for language assistance. All mathematical arguments were independently verified and formalized by the authors.

\section*{Conflict of Interest}

The authors declare no conflicts of interest.

\section*{Data Availability Statement}

Data sharing is not applicable to this article, as no datasets were generated or analyzed during the current study.


\begin{thebibliography}{99}

\bibitem[BR14]{BabuRadhakrishnan2014}
S.~A. Babu and J. Radhakrishnan,
\emph{An entropy-based proof for the Moore bound for irregular graphs},
in \emph{Perspectives in Computational Complexity},
Progr. Comput. Sci. Appl. Logic, vol.~26,
Birkh\"auser/Springer, Cham, 2014, pp.~173--181.

\bibitem[BHOW11]{BHOW11}
I. Benjamini, C. Hoppen, E. Ofek, P. Pra{\l}at, and N. Wormald,
\emph{Geodesics and almost geodesic cycles in random regular graphs},
J. Graph Theory \textbf{66} (2011), no.~2, 115--136.
\href{https://doi.org/10.1002/jgt.20496}{doi:10.1002/jgt.20496}.

\bibitem[BT23]{ben2023}
I. Benjamini and E. Tzalik,
\emph{On the number of shortest paths in graphs},
preprint, 2023,
\url{https://arxiv.org/abs/2311.10014}.

\bibitem[KSSZ26]{KSSZ26}
M. Knor, J. Sedlar, R. \v{S}krekovski, and X.-D. Zhang,
\emph{Counting geodesic paths in graphs},
preprint, 2026,
\url{https://arxiv.org/abs/2604.04907}.

\bibitem[Lub66]{lubell1966}
D. Lubell,
\emph{A short proof of Sperner's lemma},
J. Combinatorial Theory \textbf{1} (1966), 299.

\end{thebibliography}
\end{document}